\documentclass{article}
\usepackage[utf8]{inputenc}
\usepackage{todonotes}
\usepackage{caption}
\captionsetup[figure]{font=small}

\usepackage{nicefrac}
\usepackage{amsmath,enumerate}
 \usepackage{amsthm}
\usepackage{amsfonts}
\usepackage{amssymb}
\usepackage{mathrsfs}
\usepackage{pgfplots}
\usepackage{mathtools}
\usepackage{comment}
\usepackage{lmodern}
\usepackage{bm}
\usepackage{subfig}
\usepackage{caption}
\captionsetup{font=small}

\usepackage{tikz}
\usetikzlibrary{shapes,arrows,graphs,graphs.standard,shapes.misc}
\usetikzlibrary{matrix,decorations.pathreplacing, calc, positioning,fit}
\usetikzlibrary{shapes.geometric}

\newtheorem{Theorem}{Theorem}
\newtheorem{Definition}{Definition}

\newtheorem{Proposition}[Theorem]{Proposition}
\newtheorem{Lemma}{Lemma}
\newtheorem{Corollary}[Theorem]{Corollary}

\newcommand{\R}{\mathbb{R}}
\newcommand{\Z}{\mathbb{Z}}

\newcommand{\bu}{{\bm u}}

\newcommand{\bw}{{\bm w}}

\title{On Integer Balancing of Digraphs}
\author{Mohamed-Ali Belabbas\footnote{Coordinated Science Laboratory, University of Illinois, Urbana-Champaign. Email: \texttt{belabbas@illinois.edu}}\,\, and Xudong Chen\footnote{Department of Electrical, Computer, and Energy Engineering, University of Colorado, Boulder. Email: \texttt{xudong.chen@colorado.edu}}}

\begin{document}

\date{}
\maketitle

\begin{abstract}
A weighted digraph is balanced if the sums of the weights of the incoming and of the outgoing edges are equal at each vertex. We show that if these sums are integers, then the edge weights can be integers as well. 
\end{abstract}

\section{Introduction}

Let $G = (V, E)$ be a strongly connected digraph on $n$ vertices, with vertex set $V$ and  edge set $E$. We use $v_iv_j$ to denote an edge from $v_i$ to $v_j$. The digraph $G$ can have self-arcs. For a vertex $v_i$, let $N^-(v_i):=\{v_j \in V \mid v_iv_j\in E\}$ and $N^+(v_i):=\{v_k \in V\mid v_kv_i\in E\}$ be the sets of out-neighbors and in-neighbors of $v_i$, respectively. 

We let $\R_+$ (resp. $\Z_+$) be the set of nonnegative real numbers (resp. nonnegative integers). 
We assign $w_{ij}\in \R_+$ to edges $v_iv_j$, for $v_iv_j\in E$, and denote by $\bw\in \R^{|E|}_+$ the collection of these $w_{ij}$. We call $(G,\bw)$ a weighted digraph.  

\begin{Definition}\label{def:label}
The weighted digraph $(G,\bw)$ is said to be {\em balanced} if, for each vertex, the inflow equals to the outflow:
\begin{equation}\label{eq:balance}
u_i:=\sum_{v_j \in N^-(v_i)} w_{ij} = \sum_{v_k\in N^+(v_i)} w_{ki}, \quad \forall v_i\in V. 
\end{equation}
We call $u_i$ the {\em weight} of vertex $v_i$.
\end{Definition}

The vector $\bu:=(u_1,\ldots,u_n)\in \R^n_+$ is said to be {\em feasible} 
if there exists $\bw\in \R^{|E|}_+$ such that~\eqref{eq:balance} holds.

Balanced digraphs have a host of applications in engineering and applied sciences, including the study of flocking behaviors~\cite{jadbabaie2003coordination},  sensor networks and distributed estimation~\cite{carli2008distributed}. While balancing  over the real numbers is acceptable in some scenarios,  others such as traffic management and fractional packing,  require integer balancing~\cite{garg2007faster,plotkin1995fast,bertsekas1998network, hooi1970class,rikos2017distributed}.

If all the $w_{ij}$ are integers, then clearly every $u_i$ is an integer. 
The question we are interested in is: given a feasible integer-valued $\bu$,  can we find an integer-valued $\bw$? We show that the answer is affirmative: 

\begin{Theorem}\label{th:main1}
Let $G$ be a strongly connected digraph and $\bu \in \Z^n$ be any feasible vector. Then, there exist nonnegative integers $w_{ij}$ such that~\eqref{eq:balance} holds.
\end{Theorem}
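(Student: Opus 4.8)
The feasibility assumption gives us a real-valued balanced weight vector $\bw \in \R_+^{|E|}$ realizing the integer vertex weights $\bu$. The natural approach is to work with the polytope of all such realizations,
\[
P := \left\{ \bw \in \R_+^{|E|} : \sum_{v_j \in N^-(v_i)} w_{ij} - \sum_{v_k \in N^+(v_i)} w_{ki} = 0, \ \sum_{v_j \in N^-(v_i)} w_{ij} = u_i, \ \forall v_i \in V \right\},
\]
which is nonempty by hypothesis. The plan is to show $P$ has an integral vertex. The defining linear system has constraint matrix built from the incidence structure of $G$: the balance constraints use the (signed) vertex-edge incidence matrix, and the inflow-equals-$u_i$ constraints use the head-incidence matrix. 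I would argue this combined system is totally unimodular, so that every vertex of $P$ is integral whenever the right-hand side $\bu$ is integral.

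**The unimodularity step — the main obstacle.** The incidence matrix of a directed graph is totally unimodular, but here we have appended a second block of rows (the $\sum w_{ij} = u_i$ constraints), so TU is not immediate and this is where the real work lies. I see two routes. The first is to reformulate the balancing problem as a flow problem on an auxiliary (bipartite-like) network — split each vertex $v_i$ into an "in-copy" and "out-copy," route flow $u_i$ from source to in-copy and from out-copy to sink, and connect in-copies to out-copies along the edges of $G$ — so that balanced weightings on $G$ correspond exactly to feasible integral circulations/flows on the auxiliary network. Integrality then follows from the classical integral flow theorem (Hoffman's circulation theorem), since the auxiliary network's incidence matrix is genuinely TU. The second route is to verify the Ghouila-Houri characterization directly for the combined matrix above; I expect the balance rows (which come in $\pm$ pairs per edge) to interact cleanly with the inflow rows, but this is messier than the flow reduction.

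**Finishing.** Once integrality of some vertex $\bw^\star$ of $P$ is established, we are essentially done: $\bw^\star \in \Z_+^{|E|}$ satisfies the balance equations \eqref{eq:balance} with vertex weights $\bu$, which is exactly the conclusion of Theorem~\ref{th:main1}. The nonnegativity is automatic since $P \subseteq \R_+^{|E|}$, and $P$ has at least one vertex because it is a nonempty polytope contained in the nonnegative orthant (hence pointed, containing no lines). I would remark that strong connectivity of $G$ is not actually needed for the integrality argument per se — it guarantees that $P$ is bounded (a polytope rather than an unbounded polyhedron, since every edge lies on a cycle so weights cannot run off to infinity along a ray while keeping vertex weights fixed), which is convenient though the integral-vertex conclusion survives even in the unbounded case via the theory of TU systems.
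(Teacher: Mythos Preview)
Your approach is correct and essentially classical. The flow reformulation you sketch---splitting each $v_i$ into two copies and reading the balance constraints as a transportation problem between the ``out'' side and the ``in'' side---is exactly the paper's bipartite graph $B=(X,Y,F)$, with $X$ the out-copies and $Y$ the in-copies. In that formulation the constraint matrix is the incidence matrix of a bipartite graph, hence totally unimodular, and integrality of the vertices of $P$ follows from standard LP theory (equivalently, from the integral circulation theorem you cite). One cosmetic slip: your source/sink orientation is reversed; with edges of $G$ read tail-to-head, flow should run source $\to$ out-copy $\to$ in-copy $\to$ sink. This does not affect the argument.

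The paper makes the same bipartite reduction but, instead of invoking TU or Hoffman, proves integrality by hand: starting from any real feasible $\bw$, it repeatedly locates a cycle in $B$ whose edges all carry non-integer weight and shifts $\pm\epsilon$ alternately around that cycle so as to integralize at least one edge while preserving balance and nonnegativity (Propositions~\ref{prop:1} and~\ref{prop:2}). This is precisely the cycle-cancellation proof of transportation-polytope integrality, written out as an explicit terminating algorithm. So your route and the paper's are two presentations of the same mechanism: yours imports the machinery and is shorter; the paper's unpacks it and delivers a constructive procedure. Your remark that strong connectivity is inessential to the integrality step is also correct, and in fact boundedness of $P$ already follows from $0\le w_{ij}\le u_i$, independent of connectivity; the paper likewise notes that the result extends to weakly connected digraphs.
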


The result also applies to the case of weakly connected digraph $G$, based upon the fact that $(G,\bw)$ is balanced if and only if every strongly connected component of $(G, \bw)$ is balanced~\cite{hooi1970class}.

We provide below a constructive proof of Theorem~\ref{th:main1}. 
\section{Algorithm, Propositions, and Proofs}

To proceed, we associate to the digraph $G = (V, E)$ an undirected bipartite graph $B=(X,Y,F)$ on $2n$ vertices, where $X\sqcup Y$ is the vertex set and $F$ is the edge set. Each of the two sets $X$ and $Y$ comprises $n$ vertices.  
The edge set $F$ is defined as follows: there is an edge $(x_i, y_j)$ in $B$ if $v_iv_j$ is an edge of $G$. See Fig.~\ref{fig:onlyfig} for an illustration.

\begin{figure}
    \centering
\subfloat[\label{sfig1:g}]{
\includegraphics{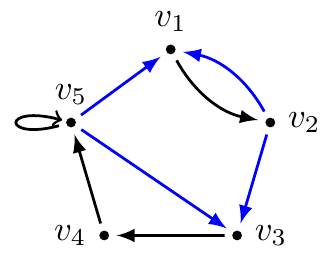}
}\qquad \qquad
\subfloat[\label{sfig2:g}]{
\includegraphics{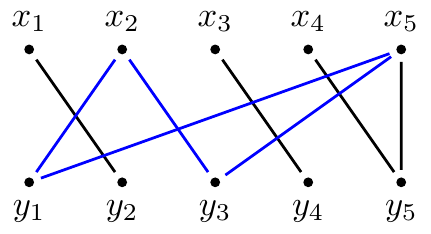}
}
    
    \caption{{\em Left}: A digraph $G$. {\em Right}: Its bipartite counterpart $B$. A cycle in $B$, and the corresponding edges in $G$, are marked in blue.}
    \label{fig:onlyfig}
\end{figure}

Note that the directed edges in $G$ are in one-to-one correspondence with the undirected edges in $B$.  Thus, we can assign the edge weights $w_{ij}$, for $v_iv_j\in E$, to the edges $(x_i, y_j)$ in $B$. 

The balance relation~\eqref{eq:balance}, when applied to the bipartite representation of $G$,  is now turned into 
\begin{equation}\label{eq:balanceBipart}
u_i = \sum_{y_j\in N(x_i)} w_{ij} = \sum_{x_k\in N(y_i)} w_{ki}, \quad \forall i = 1,\ldots, n.
\end{equation}
If the above relations hold for some nonnegative real numbers $u_i$, then $(B,\bw)$ is said to be balanced with vertex weights $u_i$ for both $x_i$ and $y_i$. The following result is  an immediate consequence of the above construction of $(B,\bw)$: 

\begin{Lemma}
The digraph $(G,\bw)$ is balanced if and only if $(B,\bw)$ is balanced.
\end{Lemma}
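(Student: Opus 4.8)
The plan is simply to unwind the two notions of \emph{balanced} and match the summations term by term, using the edge bijection $v_iv_j \leftrightarrow (x_i,y_j)$ that defines $B$. Since the statement is purely combinatorial, no estimates or auxiliary constructions are needed; the only care required is to keep straight which endpoint of a directed edge is encoded by an $X$-vertex and which by a $Y$-vertex.

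First I would record the neighborhood identities that follow directly from the construction of $B$. Fix an index $i$. The vertex $y_j$ is a neighbor of $x_i$ in $B$ exactly when $(x_i,y_j)\in F$, which by definition happens exactly when $v_iv_j\in E$, i.e.\ exactly when $v_j\in N^-(v_i)$; hence $N(x_i)=\{\,y_j : v_j\in N^-(v_i)\,\}$. Symmetrically, $x_k$ is a neighbor of $y_i$ in $B$ exactly when $(x_k,y_i)\in F$, i.e.\ exactly when $v_kv_i\in E$, i.e.\ exactly when $v_k\in N^+(v_i)$, so $N(y_i)=\{\,x_k : v_k\in N^+(v_i)\,\}$.

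Next I would substitute these identities into~\eqref{eq:balanceBipart}. Because the weight $w_{ij}$ attached to the edge $(x_i,y_j)$ of $B$ is by construction the same number attached to the edge $v_iv_j$ of $G$, the first sum in~\eqref{eq:balanceBipart} equals $\sum_{v_j\in N^-(v_i)} w_{ij}$, which is precisely the outflow at $v_i$ in $G$, and the second sum equals $\sum_{v_k\in N^+(v_i)} w_{ki}$, the inflow at $v_i$ in $G$. Thus, for each $i$, the pair of balance equations at $x_i$ and at $y_i$ in $B$ coincides with the single balance equation~\eqref{eq:balance} at $v_i$ in $G$, the common value $u_i$ being the vertex weight in both pictures. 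Ranging over $i=1,\dots,n$ then shows that~\eqref{eq:balance} holds at every vertex of $G$ if and only if~\eqref{eq:balanceBipart} holds for all $i$, which is the asserted equivalence.

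There is no substantive obstacle here: the lemma is a bookkeeping statement, and the only place one could slip is conflating $N(x_i)$ with $N(y_i)$, i.e.\ reversing the roles of heads and tails in the bipartite encoding. I would therefore state the two neighborhood identities explicitly, as above, before performing the substitution, so that the direction of the correspondence is unambiguous.
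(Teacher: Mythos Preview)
Your argument is correct and is exactly the routine verification the paper has in mind: the paper does not give a separate proof but simply declares the lemma ``an immediate consequence of the above construction of $(B,\bw)$,'' and your unwinding of the neighborhood identities $N(x_i)\leftrightarrow N^-(v_i)$ and $N(y_i)\leftrightarrow N^+(v_i)$ is precisely that consequence made explicit.
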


Now, let $\bu\in \Z^n$ be a feasible vector and $\bw\in \R^{|E|}_+$ be such that~\eqref{eq:balanceBipart} is satisfied. In the sequel, we refer  to elements of $\R_+\backslash \Z_+$ as {\it decimal} numbers. 
 
Every cycle in $B$ has an {even} number of edges, and the number is at least 4.  
A cycle in $B$ does not correspond to a (directed) cycle in $G$, as illustrated in Fig.~\ref{fig:onlyfig}. 
Instead, if $x_{\alpha_1}y_{\beta_1}\cdots x_{\alpha_p}y_{\beta_p}x_{\alpha_1}$ is a cycle in $B$, 
then each vertex $v_{\alpha_i}$ in $G$ has two {\em outgoing} edges $v_{\alpha_i}v_{\beta_i}$ and $v_{\alpha_i}v_{\beta_{i-1}}$ (with $\beta_0$ identified with $\beta_p$) while each vertex $v_{\beta_i}$ has two {\em incoming} edges $v_{\alpha_i}v_{\beta_i}$ and $v_{\alpha_{i+1}}v_{\beta_i}$ (with $\alpha_{p+1}$ identified with $\alpha_1$).  

We next introduce the following definition:

\begin{Definition}\label{def:CPcycles}
An edge in $(B,\bw)$ is called {\em decimal} if its weight is a decimal number. A cycle $C$ in $(B,\bw)$ is {\em completely decimal} if all its edges are decimal.
\end{Definition}

Given a balanced $(B,\bw)$, we  aim to obtain a set of integer edge weights $w^*_{ij}\in \mathbb{Z}_+$ that satisfy~\eqref{eq:balance}. We present below an algorithm that does so in a finite number of steps:

\vspace{.2cm}

\noindent{\bf Algorithm 1:}
\begin{enumerate}
    \item If $(B,\bw)$ does not contain a completely decimal cycle, then the algorithm is terminated. Otherwise, select a completely decimal cycle in $B$: $$C=x_{\alpha_1}y_{\beta_1}\cdots x_{\alpha_p}y_{\beta_p}x_{\alpha_1}.$$
    \item For the selected cycle $C$, find an edge whose weight has the smallest decimal part. Without loss of generality, we assume that the edge is $x_{\alpha_1}y_{\beta_1}$ and the decimal part is $\epsilon := w_{\alpha_1\beta_1} - \lfloor w_{\alpha_1\beta_1} \rfloor$.  Update the weights along the cycle as follows: 
    \begin{equation}\label{eq:updaterule}
    \begin{array}{rcl}
    w_{{\alpha_i}{\beta_i}} & \leftarrow & w_{{\alpha_i}{\beta_i}} -  \epsilon \\
    w_{{\beta_i}{\alpha_{i+1}}} & \leftarrow & w_{{\beta_i}{\alpha_{i+1}}} +  \epsilon
    \end{array}
    \quad \mbox{ for } 1 \leq i \leq p,
    \end{equation}
    where we identify $x_{\alpha_{p+1}}$ with $x_{\alpha_1}$. 
    All the other edge weights remain unchanged. 
\end{enumerate}

Note that one can easily obtain a decimal cycle in $(B,\bw)$, if one exists. Let $x_i$ be a vertex incident to a decimal edge. Denote by $\lambda(x_i)$ the number of decimal edges incident to $x_i$. Since the vertex weight $u_i$ of $x_i$ is integer-valued, then clearly $\lambda(x_i) \geq 2$. 
Now fix a decimal edge $(x_i,y_j)$ in $(B,\bw)$. By the above arguments, $\lambda(y_j) \geq 2$. Thus, there exists another decimal edge incident to $y_j$, say $(y_j,x_k)$ and, similarly, $\lambda(x_k)\geq 2$. Iterating this procedure, we will return to some  previously encountered vertex $x_\ell$, since $B$ is finite.  By construction, the vertices obtained in the process yield a completely decimal cycle.

Theorem~\ref{th:main1} is then a direct consequence of the following result:

\begin{Theorem}\label{thm:algorithm}
Let $(B, \bw)$ be a balanced bipartite graph with integer-valued vertex weights $\bu$ satisfying~\eqref{eq:balanceBipart}. Then, Algorithm 1 terminates in a finite number of steps and returns a nonnegative integer-valued solution $\bw^*$ to~\eqref{eq:balanceBipart}, with $\bu^* = \bu$. 
\end{Theorem}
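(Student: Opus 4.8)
The plan is to show two things: (i) each iteration of Algorithm 1 preserves the invariants --- nonnegativity of all weights, the balance relations~\eqref{eq:balanceBipart}, and the vertex-weight vector $\bu$ --- and (ii) the algorithm cannot run forever. Once both are established, termination means no completely decimal cycle remains; combined with the observation in the excerpt (a vertex incident to a decimal edge is incident to at least two, which forces a completely decimal cycle to exist whenever any decimal edge is present), this yields $\bw^*\in\Z_+^{|E|}$, and (i) gives $\bu^*=\bu$ with~\eqref{eq:balanceBipart} satisfied.

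For (i), the balance-preservation is a local check: along the cycle $C=x_{\alpha_1}y_{\beta_1}\cdots x_{\alpha_p}y_{\beta_p}x_{\alpha_1}$, the vertex $x_{\alpha_i}$ sees exactly two updated edges, $x_{\alpha_i}y_{\beta_i}$ (decremented by $\epsilon$) and $y_{\beta_{i-1}}x_{\alpha_i}$ (incremented by $\epsilon$), both on the same side of its balance equation, so $u_{\alpha_i}$ is unchanged; likewise $y_{\beta_i}$ sees $x_{\alpha_i}y_{\beta_i}$ (decremented) and $y_{\beta_i}x_{\alpha_{i+1}}$ (incremented) --- here I need to be careful that in the bipartite balance equation~\eqref{eq:balanceBipart} one of these is an "$x\in N(y)$" term and the other an edge counted at $x_{\alpha_{i+1}}$, so I should re-examine which edges are incremented versus decremented and confirm the net change at every vertex of $C$ is zero, and that vertices off $C$ are untouched. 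Nonnegativity: the decremented edges drop by $\epsilon$, and since $\epsilon$ is the smallest decimal part among edges of $C$, for any decremented edge $e$ on $C$ its decimal part is $\ge\epsilon$, so its weight stays $\ge\lfloor w_e\rfloor\ge 0$; incremented edges only grow.

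For (ii), the key quantity is the number of decimal edges, call it $D(\bw)$. The chosen edge $x_{\alpha_1}y_{\beta_1}$ has decimal part exactly $\epsilon$, so after subtracting $\epsilon$ it becomes integral: $D$ drops by at least one from this edge. The subtlety --- and this is the step I expect to be the main obstacle --- is that the update can \emph{create} new decimal edges, since the incremented edges $y_{\beta_i}x_{\alpha_{i+1}}$ need not have been decimal to begin with (a cycle is "completely decimal" only in its $x_{\alpha_i}y_{\beta_i}$-type edges? no --- reread: the cycle alternates and \emph{all} its edges are decimal by Definition~\ref{def:CPcycles}), so in fact both families of edges on $C$ are decimal before the step. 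I then need to check that adding $\epsilon$ to a decimal edge whose decimal part is $\delta\ge\epsilon$ gives decimal part $\delta+\epsilon$ if $\delta+\epsilon<1$ and $\delta+\epsilon-1$ (possibly $0$) if $\delta+\epsilon\ge1$ --- either way it stays in $[0,1)$ and the edge count does not increase there; and subtracting $\epsilon$ from a decimal edge with decimal part $\delta\ge\epsilon$ gives $\delta-\epsilon\in[0,1)$, which is $0$ precisely when $\delta=\epsilon$. Hence $D(\bw)$ is non-increasing and strictly decreases (by the edge $x_{\alpha_1}y_{\beta_1}$ alone) at every iteration. Since $D(\bw)$ is a nonnegative integer bounded by $|F|=|E|$, the algorithm halts after at most $|E|$ iterations.

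Finally I would assemble the pieces: by the halting argument the algorithm stops; by the "decimal edge forces a decimal cycle" observation, at termination there are no decimal edges at all, so $\bw^*\in\Z_+^{|E|}$; by (i), $(B,\bw^*)$ is still balanced with the same integer vertex weights, i.e. $\bw^*$ solves~\eqref{eq:balanceBipart} with $\bu^*=\bu$. The one place to be vigilant is the sign-bookkeeping in step (i) for the bipartite balance equation~\eqref{eq:balanceBipart}; everything else is a short estimate.
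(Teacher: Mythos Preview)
Your proposal is correct and mirrors the paper's structure closely: part (i) is exactly Proposition~\ref{prop:1}, and your final assembly (no completely decimal cycle $\Rightarrow$ no decimal edge $\Rightarrow$ $\bw^*\in\Z_+^{|E|}$) is Proposition~\ref{prop:2}. The one genuine difference is the termination measure. The paper argues that each step kills at least one completely decimal cycle (since $x_{\alpha_1}y_{\beta_1}$ becomes integral and, implicitly, no integral edge turns decimal). You instead track $D(\bw)$, the number of decimal edges, and show it strictly decreases; this is a cleaner potential function because it yields an explicit bound of at most $|E|$ iterations, whereas the number of completely decimal cycles can be exponential in $|E|$. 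Both arguments rest on the same observation---all edges of $C$ start decimal and none of the updates can create a new decimal edge---so the underlying idea is identical, but your choice of invariant is sharper. Your flagged worry about sign bookkeeping in~\eqref{eq:balanceBipart} is harmless: at each $x_{\alpha_i}$ the two updated incident edges both contribute to the same sum $\sum_{y_j\in N(x_{\alpha_i})} w_{\alpha_i j}$, one gaining $\epsilon$ and one losing $\epsilon$, and symmetrically at each $y_{\beta_i}$.
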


We establish below Theorem~\ref{thm:algorithm} and start with the following proposition: 

\begin{Proposition}\label{prop:1}
Let $(B,\bw)$ be balanced with vertex weights $\bu$ and $C$ be a completely decimal cycle in $(B,\bw)$. Denote by $(B,\bw')$ the bipartite graph obtained after a one-step update on $\bw$ described by~\eqref{eq:updaterule}. Let $\bu'$ be the vertex weights associated with $(B,\bw')$. Then, $(B,\bw')$ is balanced with $\bu' = \bu$.    
\end{Proposition}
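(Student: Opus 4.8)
The plan is to verify directly that the update rule~\eqref{eq:updaterule} preserves the balance condition~\eqref{eq:balanceBipart} at every vertex, by checking that the net change in the vertex weight is zero at each of the $2n$ vertices. The key observation is that each vertex of $B$ that lies on the cycle $C$ has exactly two incident edges belonging to $C$, and the update decreases one of them by $\epsilon$ and increases the other by $\epsilon$; vertices not on $C$ have no incident edge modified at all.

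First I would organize the edges of $C$ into the two families appearing in~\eqref{eq:updaterule}: the edges $x_{\alpha_i}y_{\beta_i}$ (whose weights decrease by $\epsilon$) and the edges $y_{\beta_i}x_{\alpha_{i+1}}$ (whose weights increase by $\epsilon$), with the cyclic identification $x_{\alpha_{p+1}} = x_{\alpha_1}$. Then I would go vertex by vertex. Consider a vertex $x_{\alpha_i}$ on the cycle. Its two incident cycle-edges are $x_{\alpha_i}y_{\beta_i}$ (in the first family, decreased by $\epsilon$) and $y_{\beta_{i-1}}x_{\alpha_i}$ (in the second family, increased by $\epsilon$), using $\beta_0 = \beta_p$. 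Hence $u'_{\alpha_i} = \sum_{y_j \in N(x_{\alpha_i})} w'_{\alpha_i j} = u_{\alpha_i} - \epsilon + \epsilon = u_{\alpha_i}$. Symmetrically, consider a vertex $y_{\beta_i}$ on the cycle: its two incident cycle-edges are $x_{\alpha_i}y_{\beta_i}$ (first family, decreased) and $y_{\beta_i}x_{\alpha_{i+1}}$ (second family, increased), so $u'_{\beta_i} = u_{\beta_i} - \epsilon + \epsilon = u_{\beta_i}$. For any vertex $x_i$ or $y_i$ not appearing among the $\alpha$'s or $\beta$'s, none of its incident edge weights changes, so its vertex weight is trivially unchanged. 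This proves $\bu' = \bu$; in particular $(B,\bw')$ satisfies~\eqref{eq:balanceBipart} and is balanced.

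One subtlety I would flag and address: a cycle in a bipartite graph is a closed walk with no repeated vertices, so the indices $\alpha_1,\dots,\alpha_p$ are pairwise distinct and likewise $\beta_1,\dots,\beta_p$ are pairwise distinct; this is what guarantees that each on-cycle vertex is touched by exactly two cycle-edges and that the ``$-\epsilon$ then $+\epsilon$'' bookkeeping is not corrupted by a vertex being visited twice. I would also note that although $C$ is an undirected cycle in $B$, the edges alternate between the two sides $X$ and $Y$, which is precisely what makes the two families in~\eqref{eq:updaterule} well defined and exhaustive of the edges of $C$. I do not expect a genuine obstacle here: the content of the proposition is a one-line cancellation repeated at each vertex, and the only thing requiring care is the cyclic indexing conventions ($\beta_0 = \beta_p$, $\alpha_{p+1} = \alpha_1$) so that the telescoping is accounted for correctly at the vertex where the cycle ``wraps around.''

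Note that this proposition only asserts preservation of balance and of the vertex weights; it deliberately says nothing about nonnegativity of $\bw'$ or about termination of Algorithm~1 — those will be handled separately in the proof of Theorem~\ref{thm:algorithm}, where the choice of $\epsilon$ as the \emph{smallest} decimal part along $C$ ensures $w'_{\alpha_i\beta_i} \geq \lfloor w_{\alpha_i\beta_i}\rfloor \geq 0$ and where a potential-function argument (e.g., counting decimal edges or summing decimal parts) will give finiteness.
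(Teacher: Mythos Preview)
Your argument is essentially the same as the paper's: check that vertices off $C$ are untouched, and that each on-cycle vertex sees one $-\epsilon$ and one $+\epsilon$ among its incident edge weights, so~\eqref{eq:balanceBipart} is preserved with $\bu' = \bu$. The paper writes this via the difference~\eqref{eq:gimmeabreak}, and your remark about the $\alpha_i$ (resp.\ $\beta_i$) being pairwise distinct is exactly what justifies the ``exactly two incident cycle-edges'' step.

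One small correction: you assert that the proposition ``deliberately says nothing about nonnegativity of $\bw'$,'' but in this paper's conventions a weighted (bipartite) graph has $\bw \in \R_+^{|E|}$ by definition, so asserting that $(B,\bw')$ is \emph{balanced} implicitly includes $\bw' \ge 0$. The paper's own proof accordingly closes with the observation that, since $\epsilon$ is the smallest decimal part along $C$, the updated weights remain nonnegative. You already have the right inequality ($w'_{\alpha_i\beta_i} \ge \lfloor w_{\alpha_i\beta_i}\rfloor \ge 0$); it just belongs here rather than in the proof of Theorem~\ref{thm:algorithm}.
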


\begin{proof}
If a vertex $x_i$ does not belong to $C$, then none of the edges incident to it are updated. Hence, the summation $\sum_{y_j \in N(x_i)}w_{ij}$ is unchanged. The same argument  applies to any vertex $y_j$ that does not belong to $C$.

Next, denote the cycle by $C=x_{\alpha_1}y_{\beta_1}\cdots x_{\alpha_p}y_{\beta_p}x_{\alpha_1}$. Every vertex in $C$ is incident to exactly two consecutive edges in $C$. For any vertex $x_{\alpha_i}$ in the cycle, we have that
\begin{equation}\label{eq:gimmeabreak}
\sum_{\gamma\in N(x_{\alpha_i})} (w'_{\alpha_i \gamma} -  w_{\alpha_i \gamma}) = (w'_{\alpha_i\beta_{i-1}} + w'_{\alpha_{i} \beta_{i}}) - 
(w_{\alpha_i\beta_{i-1}} + w_{{\alpha_{i}} {\beta_{i}}}).
\end{equation}
We identify $\beta_0$ with $\beta_p$ for the case $i = 1$. 
By~\eqref{eq:updaterule}, the two expressions in parentheses on the right hand side of~\eqref{eq:gimmeabreak} are equal, so the difference is $0$.    
The same arguments can be applied to vertices $y_{\beta_i}$. 

Finally, because $\epsilon$ is the smallest decimal part of the weights on the edges along the cycle $C$, the updated edge weights are nonnegative.
\end{proof}

We next have the following proposition:

\begin{Proposition}\label{prop:2}
Let $(B,\bw)$ be a balanced bipartite graph, with integer-valued vertex weights. Then, the following  statements are equivalent:
\begin{enumerate}
    \item There is no decimal edge; 
    \item There is no completely decimal cycle;
    \item The vector $\bw$ is integer-valued.
\end{enumerate}
\end{Proposition}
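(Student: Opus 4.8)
The plan is to prove the cycle of implications $(3) \Rightarrow (1) \Rightarrow (2) \Rightarrow (3)$, since the first two implications are essentially immediate and only the last requires an argument. For $(3) \Rightarrow (1)$: if $\bw$ is integer-valued then no edge weight has a nonzero decimal part, so by definition there is no decimal edge. For $(1) \Rightarrow (2)$: a completely decimal cycle consists entirely of decimal edges by Definition~\ref{def:CPcycles}, so if there are no decimal edges at all, there can be no completely decimal cycle.

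The substantive direction is $(2) \Rightarrow (3)$, which I would prove by contraposition: assuming $\bw$ is not integer-valued, I produce a completely decimal cycle. This is exactly the construction already spelled out in the paragraph preceding Theorem~\ref{thm:algorithm}, so I would simply invoke it. Concretely: since $\bw$ is not integer-valued there is at least one decimal edge, say incident to a vertex $x_i$ (or $y_i$, symmetric). Because the vertex weight $u_i$ is an integer, the sum of the weights incident to $x_i$ is an integer; a single decimal summand cannot make an integer sum, so $x_i$ must be incident to at least two decimal edges, i.e. $\lambda(x_i) \geq 2$. The same reasoning applies at every vertex of $B$ that is incident to a decimal edge. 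Hence, starting from a decimal edge and repeatedly using that each endpoint has a second decimal edge, I can grow a walk using only decimal edges; since $B$ is finite this walk must eventually revisit a vertex, and extracting the closed portion yields a cycle all of whose edges are decimal — a completely decimal cycle. This contradicts $(2)$, establishing $(2) \Rightarrow (3)$.

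I do not anticipate a genuine obstacle here; the only point requiring a little care is the ``one decimal summand in an integer sum'' argument — more precisely, one should note that if exactly one edge incident to a vertex were decimal, the remaining integer-weighted edges would sum to an integer, forcing the total to be non-integer, contradicting $u_i \in \Z$. It is worth stating this cleanly rather than waving at it, and also worth remarking that the argument is identical on the $X$-side and the $Y$-side of $B$ so that ``$\lambda(\cdot)\geq 2$ at every decimal-incident vertex'' holds throughout $B$. With that observation in hand, the walk-extension argument closes the loop of implications and the proposition follows.
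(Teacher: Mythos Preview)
Your proof is correct. Your cycle of implications $(3)\Rightarrow(1)\Rightarrow(2)\Rightarrow(3)$ is fine, and your contrapositive argument for $(2)\Rightarrow(3)$ via the walk-extension construction (every vertex incident to a decimal edge has $\lambda(\cdot)\ge 2$, so a decimal walk must close up in a finite graph) is exactly the argument the paper sketches in the paragraph after Algorithm~1 and invokes in the first line of its proof for the implication $(2)\Rightarrow(1)$.

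Where the paper differs is that, having noted this, it then writes out a second, direct argument for $(2)\Rightarrow(3)$: it lets $F'$ be the set of decimal edges, observes that the subgraph $B'=(X',Y',F')$ must be acyclic under hypothesis $(2)$, hence a forest, and then takes a leaf $x_i$ of some tree component; since $x_i$ has exactly one decimal edge incident to it, the balance relation $w_{ij}=u_i-\sum_{j'\neq j} w_{ij'}$ forces that edge weight to be an integer, a contradiction. Your walk argument and the paper's leaf argument are really two sides of the same coin --- ``every decimal-incident vertex has degree $\ge 2$ in the decimal subgraph'' is equivalent to ``the decimal subgraph has no leaf'' --- so the content is the same, just packaged as contraposition-plus-walk versus direct-plus-forest. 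Neither buys anything over the other here; your version is perfectly acceptable.
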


\begin{proof}

From the discussion after Algorithm 1, we see that 2 implies 1. Furthermore, it is clear that 1 implies 2 and that 2 implies 3. We show below that 2 implies 3.

Assuming 2 holds, let $F'\subseteq F$ be the collection of decimal edges. 
We show that $F'$ is an empty set. Suppose, to the contrary, that $F'$ is nonempty; then, we let $X'\subseteq X$ and $Y'\subseteq Y$ be the collections of vertices incident to the edges in $F'$. Consider the subgraph $B' = (X',Y',F')$ induced by $X'\sqcup Y'$. Because $(B,\bw)$ does not have a completely decimal cycle, $B'$ is acyclic. 
Denote by $B'_1,\ldots, B'_m$ the connected components of $B'$, each of which is a tree. Pick an arbitrary tree $B'_k$ and a leaf of $B'_k$, say $x_i$. On the one hand, there exists one and only one edge $(x_i, y_j)$ in $B'_k$ such that the weight $w_{ij}$ is decimal. By construction, this edge is also the only decimal edge in $B$ incident to $x_i$. On the other hand, since $(B,\bw)$ is balanced, we have that
$$
 w_{ij}  = u_i -  \sum_{y_{j'} \in N(x_i)\backslash\{y_j\}} w_{ij'}. 
$$
The right hand side of the above expression is integer-valued, which is a contradiction. 
\end{proof}

In fact, more can be said about decimal edges and completely decimal cycles: 

\begin{Corollary}
Let $(B,\bw)$ be a balanced bipartite graph, with integer-valued vertex weights. Then, every decimal edge belongs to a completely decimal cycle. 
\end{Corollary}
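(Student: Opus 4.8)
The plan is to leverage Proposition~\ref{prop:2} together with the argument given in the discussion after Algorithm 1. First I would fix an arbitrary decimal edge $(x_i, y_j)$ in $(B, \bw)$ and show that it lies on a completely decimal cycle. The key observation, already noted in the text, is that if $x_i$ is incident to a decimal edge, then $\lambda(x_i) \geq 2$ because the vertex weight $u_i$ is an integer: a single decimal summand cannot make an integer total, since the decimal parts of the remaining (integer) edges contribute nothing. The same holds for $y_j$, and more generally for \emph{every} vertex incident to a decimal edge.

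Next I would carry out the path-growing construction, but starting from the given edge $(x_i, y_j)$ and being careful that it gets incorporated into the returned cycle. Concretely: begin the walk at $x_i$, traverse the decimal edge to $y_j$; since $\lambda(y_j) \geq 2$, there is a second decimal edge at $y_j$ leading to some $x_k \neq x_i$... wait, it need not be that $x_k \neq x_i$ a priori, but if the only other decimal edge at $y_j$ returned to $x_i$ we would already have a (degenerate) situation; more cleanly, I would instead invoke the structure from the proof of Proposition~\ref{prop:2}. There, one shows that the subgraph $B' = (X', Y', F')$ on the decimal edges $F'$ is either empty or contains a cycle — and in fact the proof shows more: if $B'$ is acyclic then picking a leaf yields a contradiction with balance. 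The refinement I need is that $B'$ has \emph{no leaves at all}, i.e.\ its minimum degree is at least $2$. This follows from exactly the $\lambda(\cdot) \geq 2$ argument applied to every vertex of $B'$.

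From minimum degree $\geq 2$ I would conclude that every edge of $B'$ lies on a cycle of $B'$, and every cycle of $B'$ is by construction completely decimal in $(B, \bw)$. The standard graph-theoretic fact I would invoke is: in a finite graph with minimum degree at least $2$, every edge lies on a cycle. (Proof sketch: delete the edge $e = (a,b)$; both endpoints still have degree $\geq 1$ in $B' - e$, so from $a$ one can grow a maximal walk in $B' - e$ avoiding repeated vertices until forced to close up, but actually the cleanest route is: since $B' - e$ still has the property that $a$ and $b$ have positive degree and $B'$ is finite, it suffices to find an $a$--$b$ path in $B' - e$; one obtains this by a pigeonhole walk argument, since from any vertex of degree $\geq 1$ one may always extend, and from any vertex of the even set or odd set of the bipartition one may always extend by at least one new edge until revisiting a vertex, and the closed portion together with $e$ is a cycle through $e$.) Since $(x_i, y_j) \in F'$, it lies on a completely decimal cycle.

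The main obstacle is the small gap between "$B'$ is acyclic leads to a contradiction" (which is all Proposition~\ref{prop:2} literally gives) and "$B'$ has minimum degree $\geq 2$, hence every edge is on a cycle." Strictly speaking the corollary needs the latter, stronger statement, so I would make sure to state and prove the minimum-degree claim explicitly rather than cite Proposition~\ref{prop:2} as a black box. Once that is in hand, the rest is the elementary cycle-through-an-edge lemma, which is routine. One subtlety to double-check: a cycle in $B$ must have length at least $4$ (as noted in the text), but the minimum-degree-$2$ argument automatically produces a genuine cycle in the simple bipartite graph $B'$, so no degenerate $2$-cycles arise and the length condition is satisfied for free.
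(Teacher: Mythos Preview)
Your approach contains a genuine gap. The ``standard graph-theoretic fact'' you invoke---that in a finite graph of minimum degree at least~$2$ every edge lies on a cycle---is false. Take two disjoint $4$-cycles in a bipartite graph and join them by one extra edge; every vertex has degree at least~$2$, yet that extra edge is a bridge and lies on no cycle. Your sketch breaks exactly here: the maximal walk you grow from~$a$ in $B'-e$ may close up into a cycle that never visits~$b$. Minimum degree~$\ge 2$ guarantees that every \emph{vertex} lies on a cycle, not every edge; the latter is equivalent to bridgelessness.

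The route can be repaired, but you must use the balance condition again, not just the observation $\lambda(\cdot)\ge 2$. What is actually needed is that the decimal subgraph~$B'$ has no bridges. One argument: if $e$ were a bridge and $C$ one component of $B'-e$, set $s(v)$ to be the sum of the fractional parts of the decimal edges incident to~$v$; each $s(v)$ is an integer because the vertex weight is, but the alternating sum $\sum_{v\in C\cap X}s(v)-\sum_{v\in C\cap Y}s(v)$ telescopes to $\pm$\,(the fractional part of the weight on~$e$), which lies strictly between $0$ and~$1$, a contradiction. Once $B'$ is bridgeless, every edge of~$B'$ does lie on a cycle of~$B'$, and the corollary follows. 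For comparison, the paper sidesteps this entirely: it argues that a decimal edge lying on no completely decimal cycle is never touched by Algorithm~1, yet upon termination Proposition~\ref{prop:2} forces all edge weights to be integers.
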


\begin{proof}
Suppose that $(x_i,y_j)$ is a decimal edge that is not contained in any decimal cycle; then, the weight $w_{ij}$ will not be affected by executing Algorithm 1. On the other hand, when Algorithm 1 is terminated, there is no completely decimal cycle. By  Prop.~\ref{prop:2}, there is no decimal edge, which is a contradiction. 
\end{proof}

Finally, note that every one-step operation of Algorithm 1 reduces the number of completely decimal cycles by at least one. Thus, 
Theorem~\ref{thm:algorithm} follows as an immediate consequence of Propositions~\ref{prop:1} and~\ref{prop:2}.  

\bibliographystyle{plain}
\bibliography{references}

\begin{thebibliography}{1}

\bibitem{bertsekas1998network}
Dimitri~P Bertsekas.
\newblock {\em Network optimization: continuous and discrete models}.
\newblock Athena Scientific, 1998.

\bibitem{carli2008distributed}
Ruggero Carli, Alessandro Chiuso, Luca Schenato, and Sandro Zampieri.
\newblock Distributed {K}alman filtering based on consensus strategies.
\newblock {\em IEEE Journal on Selected Areas in Communications},
  26(4):622--633, 2008.

\bibitem{garg2007faster}
Naveen Garg and Jochen Koenemann.
\newblock Faster and simpler algorithms for multicommodity flow and other
  fractional packing problems.
\newblock {\em SIAM Journal on Computing}, 37(2):630--652, 2007.

\bibitem{hooi1970class}
Loh Hooi-Tong.
\newblock On a class of directed graphs—with an application to traffic-flow
  problems.
\newblock {\em Operations Research}, 18(1):87--94, 1970.

\bibitem{jadbabaie2003coordination}
Ali Jadbabaie, Jie Lin, and A~Stephen Morse.
\newblock Coordination of groups of mobile autonomous agents using nearest
  neighbor rules.
\newblock {\em IEEE Transactions on Automatic Control}, 48(6):988--1001, 2003.

\bibitem{plotkin1995fast}
Serge~A Plotkin, David~B Shmoys, and {\'E}va Tardos.
\newblock Fast approximation algorithms for fractional packing and covering
  problems.
\newblock {\em Mathematics of Operations Research}, 20(2):257--301, 1995.

\bibitem{rikos2017distributed}
Apostolos Rikos and Christoforos Hadjicostis.
\newblock Distributed integer weight balancing in the presence of time delays
  in directed graphs.
\newblock {\em IEEE Transactions on Control of Network Systems},
  5(3):1300--1309, 2017.

\end{thebibliography}

\end{document}